\newtheoremstyle{theoremstyle}
  {1.5ex plus .2ex minus .2ex}
  {1.5ex plus .2ex minus .2ex}
  {\slshape}
  {}
  {\bfseries}
  {:}
  {.5em}
  {\thmname{#1}\thmnumber{ #2}\thmnote{ (#3)}}
\theoremstyle{theoremstyle}
\newtheorem{lemma}{Lemma}
\newtheorem*{proof*}{Proof}
\newtheorem{theorem}{Theorem}
\newtheorem{corollary}{Corollary}
\def\BibTeX{{\rm B\kern-.05em{\sc i\kern-.025em b}\kern-.08emT\kern-.1667em\lower.7ex\hbox{E}\kern-.125emX}}
\begin{document}

\title{Distributed Rotary Coverage Control of Multi-Agent Systems in Uncertain Environments \\}

\author{Chao Zhai, Yanlin Li \thanks{Chao Zhai and Yanlin Li are with School of Automation, China University of Geosciences, Wuhan 430074, China, and with Hubei Key Laboratory of Advanced Control and Intelligent Automation for Complex Systems and also with Engineering Research Center of Intelligent Technology for Geo-Exploration, Ministry of Education (email: zhaichao@amss.ac.cn).
The project was supported by the ``CUG Scholar" Scientific Research Funds 
at China University of Geosciences (Wuhan) (Project No.2020138).} 
}

\maketitle

\begin{abstract}
It is always a challenging task for multi-agent systems to achieve efficient and robust coverage in uncertain environments. The absence of global positioning information on the uncertain environment introduces significant complexity to the spatially distributed design of coverage control algorithms. 
To address this issue, this paper proposes a coverage control formulation 
based on beacon-free rotary pointer partition mechanism. A partition dynamics 
is designed to enable the asymptotical consensus of multi-agent reference points, as well as the workload-balanced subdivision of coverage region. On this basis, a distributed coverage control algorithm is developed to drive each agent toward the optimal deployment of their respective subregions, thereby minimizing the coverage cost. Simulation results demonstrate that the proposed coverage control method can significantly improve overall coverage efficiency with workload balance among agents, and exhibit strong adaptability and robustness in uncertain environments.
\end{abstract}

\begin{IEEEkeywords}
Coverage control, multi-agent systems, uncertain environment, distributed control
\end{IEEEkeywords}

\section{Introduction}
In recent years, multi-agent coordination has seen remarkable progress and been widely applied in complex collective tasks such as search and rescue~\cite{b1}, autonomous exploration~\cite{b5}, area coverage~\cite{b3,b4}, border surveillance~\cite{b2} and environmental monitoring~\cite{b6}. In particular, coverage control is crucial for addressing many real-world missions.
The objective of multi-agent coverage control is to explore effective coordination and deployment strategies for multiple agents in complex environments, enabling them to access regions of interest, perform event monitoring, and optimize certain coverage performance indices.

At its core, the lack of global information and limitations in distributed coordination pose significant challenges to designing effective coverage control strategies in uncertain environments~\cite{b7}, where environment information is not available to each agent in advance. To overcome these challenges, space partitioning has emerged as a fundamental paradigm, enabling the decomposition of complex environments into manageable subregions and facilitating distributed implementation of coverage algorithms. 
Within this framework, facility location problems aim to optimally deploy mobile agents to handle random events and minimize service costs. Building upon this idea, Voronoi partition~\cite{b8} and equal workload partition~\cite{b9} provide effective solutions that guarantee the networked agents eventually converge to configurations maximizing overall coverage performance.
Voronoi partition and its extensions have long been fundamental tools in solving location optimization problems~\cite{b10,b11}.
Voronoi partition enables agents to divide the coverage area into distinct polygons by leveraging relative position information, while its centroidal Voronoi tessellation facilitates optimal resource placement~\cite{b12}. Geodesic Voronoi partition has been applied to deploy multi-agent systems in scenarios involving mixed-dimensional and heterogeneous environments~\cite{b10}.
Despite their widespread application in facility location planning, Voronoi-based approaches exhibit several limitations,
they suffer from several inherent limitations, such as high computational complexity, inefficient workload distribution, and even connectivity loss in nonconvex environments~\cite{b13}.
To overcome the aforementioned limitations, researchers have proposed the equal workload partition\cite{b3}, which divides the coverage region into subregions such that each agent is assigned an equal share of the sensing or service workload. 
Unlike traditional Voronoi-based methods that rely solely on geometric proximity, the equal workload partition explicitly incorporates environmental density or event frequency into the partitioning process, thereby achieving more balanced resource utilization among agents. In~\cite{b14}, a coverage formulation based on rotary pointer dynamics is developed to achieve workload balance through the continuous update of partition boundaries. 
However, this partition strategy depends on a pre-defined global reference point to generate the partition pointer, and thus its implementation is not fully distributed. Moreover, such reliance on global reference information limits its adaptability to scenarios where agents operate using only local sensing and relative position information, such as GPS-denied or communication-constrained environments.

To address these challenges, this paper presents a novel approach to multi-agent coverage control by leveraging rotary pointer partitions to enhance coverage efficiency in uncertain environments. The proposed rotary pointer partition enables multi-agent system to dynamically and consistently divide the coverage region into subregions without relying on a pre-defined global reference. This partition strategy ensures that each agent is responsible for its own subregion and can respond to random events while maintaining balanced workload across the entire region. 
As a result, the proposed coverage method is well-suited for scenarios where agents have access only to local information and relative position. In brief, key contributions of this study are summarized as follows: 1) A reference-point consensus strategy is developed to enable fully distributed partition without reliance on a global reference. 2) A distributed partition mechanism based on rotary pointer is designed to ensure workload balance among agents in uncertain environments. 3) A distributed control law is proposed to drive each agent toward the local optimum within its subregion, thereby improving overall coverage performance.

The structure of this paper is presented as follows. Section~\ref{sec:pro} formulates the coverage control problem for MASs in uncertain environments. Section~\ref{sec:main} presents the main results and theoretical proofs. Section~\ref{sec:num} provides experimental validation of the sectorial coverage approach. The conclusion is made in Section~\ref{sec:con}.

\section{Problem Formulation}\label{sec:pro}
Consider a two-dimensional coverage region $\Omega$, whose boundary is described by the equation $L(x, y)=0$. 
Thus, the coverage region can be defined as $\Omega= \{(x, y)\in \mathbb{R}^2|L(x, y) \leq 0\}$, as shown in Fig.~\ref{fig:reg}.
By construction, $\Omega$ is a bounded set and its boundary $\partial\Omega$ consists of finite smooth curve segments.
Multi-agent coverage refers to the strategic deployment of agents within 
a region to effectively deal with random events occurring in that area.
A spatially varying density function $\rho: \Omega\rightarrow\mathbb{R}^+$, constrained within the bounds \([\underline{\rho}, \bar{\rho}]\), is defined to quantify the spatial distribution of event occurrence probabilities over the domain \(\Omega\). Specifically, \(\rho(x, y)\) represents the likelihood that a random event may arise at a particular location \((x, y) \in \Omega\). Accordingly, regions with the higher density correspond to areas where random events are more likely to occur, thereby requiring more attention in the coverage strategy. Consider a group of $N$ agents indexed by the set 
\( \mathbb{I}_N = \{1, 2, \ldots, N\} \), which cooperatively perform coverage 
over the region \( \Omega \). 
The communication topology among agents is modeled by a static undirected graph 
\(\mathcal{G} = (\mathcal{V}, \mathcal{E}) \), 
where \( \mathcal{V} \) denotes the set of vertices corresponding to the agents, 
and \( \mathcal{E} \) represents the set of edges indicating communication links 
between agent pairs. For each agent \( i \in \mathcal{V} \), the neighbor set is defined as $\mathcal{N}_i=\{\, j\in\mathcal{V} \mid (i,j)\in\mathcal{E}\,\}$.
It is assumed that the graph \( \mathcal{G} \) is always connected, 
and each agent can obtain the state information of its neighbors 
through its sensing or communication devices.

To achieve workload balance, each agent employs a virtual rotary pointer, which serves as a partition bar that is able to rotate around its own reference point (i.e., beacon). The subregion assigned to each agent is enclosed by region boundary together with two rotary pointers (see Fig.~\ref{fig:reg}). Each agent is able to detect all events occurring within its own subregion, and it is responsible solely for handling the events that arise within its own subregion.
The rotary pointer of agent \(i\) is described as
\begin{equation}
\mathbf{s}_i=\mathbf{r}_i + \kappa \mathbf{d}_i, \quad i \in \mathbb{I}_N 
\end{equation}
with \(\mathbf{r}_i = (x^{r}_i, y^{r}_i)\) denotes the reference point of the  $i$-th agent. \(\kappa\geq 0\) is a scalar, and \(\mathbf{d}_i=[\cos \varphi_i, \sin\varphi_i]^T\) refers to the unit direction vector associated with the phase angle \(\varphi_i \in [0, 2\pi)\).
\begin{figure}[t!]
\centering
\includegraphics[width=3in]{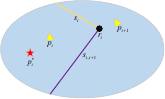}
\caption{Illustration of a  coverage region.  The coverage region is divided into $N$ subregions, each of which is enclosed by a pair of rotary pointers that rotate around the reference point, together with the boundary of the region. The red star indicates the target point $\mathbf{p}_i^{*}$ of subregion $\Omega_i$, while the yellow triangle denotes the position $\mathbf{p}_i$ of agent $i$.
The yellow line segment $\mathbf{s}_i$ and the purple line segment $\mathbf{s}_{i,i+1}$ indicate the two partition bars of agent $i$, while the black dot represents its reference point $\mathbf{r}_i$.}\label{fig:reg}
\end{figure}
Then, the adjacent rotary pointer used by the $i$-th agent for region partition is given by
\begin{equation}
\mathbf{s}_{i,i+1}=\mathbf{r}_i + \kappa \mathbf{d}_{i+1}, \quad i\in \mathbb{I}_N 
\end{equation}
with \(\mathbf{d}_{i+1} = [\cos\varphi_{i+1}, \sin\varphi_{i+1}]^T\) and 
\(\varphi_{i+1}\in [0, 2\pi)\).
Here, \( \varphi_{i+1} \) represents the phase angle of the $(I+1)$-th rotary pointer, which the $i$-th agent obtains from the \((i+1)\)-th agent.
The subregion of the \( i \)-th agent is represented as
$$
\Omega_i=\left\{(x, y) \mid L(x, y) \leq 0 \cap \varphi_i \leq \arctan\frac{y-y_i^{r}}{x - x^{r}_i} \leq \varphi_{i+1}\right\}.
$$
The workload of the \(i\)-th subregion is given by
\begin{equation}\label{eq:m}
m_i = \int_{\Omega_i} \rho(\mathbf{q})\, d\mathbf{q}, \quad i \in \mathbb{I}_N .
\end{equation}
To achieve a complete region partition and workload balance among subregions, the dynamics of reference points and rotary pointers are designed as
\begin{equation}\label{eq:fer}
\begin{split}
\dot{\varphi}_i = -\kappa_{\varphi} & \big[(2m_i - m_{i-1} - m_{i+1}) \frac{\partial m_i}{\partial \varphi_i} \\
& + (2m_{i-1} - m_{i-2} - m_i) \frac{\partial m_{i-1}}{\partial \varphi_i}\big]
\end{split}
\end{equation}
and
\begin{equation}\label{eq:point}
\begin{split}
\dot{\mathbf{r}}_i = -\kappa_r&\big[\,
(2m_i-m_{i-1}-m_{i+1})\frac{\partial m_i}{\partial\mathbf{r}_i}
\\
& + (2\mathbf{r}_i-\mathbf{r}_{i-1}-\mathbf{r}_{i+1})^T			
\big]^T
\end{split}
\end{equation}
with $\kappa_{\varphi}>0$ and $\kappa_r>0$. Note that the neighbor set of the $i$-th agent is defined as $\mathcal{N}_i=\{i+1, i-1, i-2\}$ with \(m_0 = m_N\) and \(\mathbf{r}_0=\mathbf{r}_N\), and the periodicity of hase angle is maintained by $\varphi_i=\mathrm{mod}(\varphi_i, 2\pi)$. Inspired by the work in~\cite{b4}, the performance index of multi-agent coverage is given by
\begin{equation}\label{eq:11}
J(\boldsymbol{\varphi}, \mathbf{p},\mathbf{r}) 
= \sum_{i=1}^N \int_{\Omega_i} f(\mathbf{p}_i, \mathbf{q}) \rho(\mathbf{q}) d\mathbf{q}
\end{equation}
with \(\mathbf{p} = (\mathbf{p}_1, \mathbf{p}_2, \ldots, \mathbf{p}_N)^T\), \(\mathbf{r} = (\mathbf{r}_1, \mathbf{r}_2, \ldots, \mathbf{r}_N)^T\)  and \(\boldsymbol{\varphi} = (\varphi_1, \varphi_2, \ldots, \varphi_N)^T\). In addition, \( f(\mathbf{p}_i, \mathbf{q})\) measures the cost of servicing \(\mathbf{q}\) from \(\mathbf{p}_i\), and it is differentiable with respect to 
\(\mathbf{p}_i\). The dynamics of the \(i\)-th agent is represented by
\begin{equation}\label{eq:agent}
\dot{\mathbf{p}}_i=\mathbf{u}_i, \quad i \in \mathbb{I}_N 
\end{equation}
with the control input \( \mathbf{u}_i \). The performance index \eqref{eq:11} measures the total cost for MAS in dealing with random events in the region \( \Omega \).  
Essentially, this work aims to develop a distributed control algorithm for multi-agent system \eqref{eq:agent} by solving the following optimization problem
\begin{equation}
\begin{aligned}
& \min_{\varphi, \mathbf{p},\mathbf{r}} J(\varphi, \mathbf{p},\mathbf{r}) \\
\text{s.t.} \quad & m_i = m_j, \quad \mathbf{r}_i=\mathbf{r}_j, \quad \forall~i, j \in \mathbb{I}_N .
\end{aligned}
\end{equation}

\section{Main Result}\label{sec:main}
This section focuses on the convergence analysis of reference points and virtual rotary pointers via the consensus-based approach. Then, a distributed control strategy is proposed to guide each agent to the desired equilibrium while reducing the overall coverage cost. 

\subsection{Stability Analysis for Region Partition}\label{A}
To facilitate the stability analysis, the following theoretical result is introduced to characterize the convergence behavior of partition dynamics.
\begin{lemma}\label{lem:seteq}
$$
\mathcal{S}(\mathbf{m}, \mathbf{r}) = \mathcal{M}(\mathbf{m}, \mathbf{r}),
$$
where
$$
\mathcal{S}(\mathbf{m}, \mathbf{r}) = \left\{ (\mathbf{m}, \mathbf{r}) \;\middle|\;\dot{\varphi}_i=0,\ \dot{\mathbf{r}}_i=\mathbf{0},\ \forall i \in \mathbb{I}_N \right\}
$$
and 
\begin{equation}
\mathcal{M}(\mathbf{m},\mathbf{r}) = \left\{ (\mathbf{m},\mathbf{r} )\mid m_i = m_j, \mathbf{r}_i=\mathbf{r}_j, \forall i, j \in\mathbb{I}_N \right\}.
\end{equation}  
\end{lemma}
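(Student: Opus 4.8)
The plan is to establish the two set inclusions separately. The inclusion $\mathcal M(\mathbf m,\mathbf r)\subseteq\mathcal S(\mathbf m,\mathbf r)$ is immediate: if $m_i=m_j$ and $\mathbf r_i=\mathbf r_j$ hold for all $i,j\in\mathbb I_N$, then each of the difference terms $2m_i-m_{i-1}-m_{i+1}$, $2m_{i-1}-m_{i-2}-m_i$ and $2\mathbf r_i-\mathbf r_{i-1}-\mathbf r_{i+1}$ vanishes, so the right-hand sides of \eqref{eq:fer} and \eqref{eq:point} are zero and $\dot\varphi_i=0$, $\dot{\mathbf r}_i=\mathbf 0$.

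The content lies in the reverse inclusion $\mathcal S\subseteq\mathcal M$. Fix $(\mathbf m,\mathbf r)\in\mathcal S$ and abbreviate the cyclic discrete Laplacian of the workloads by $c_i:=2m_i-m_{i-1}-m_{i+1}$, so that the second bracketed term in \eqref{eq:fer} is $2m_{i-1}-m_{i-2}-m_i=c_{i-1}$. Two preliminary facts are needed. First, a purely algebraic one: reindexing the cyclic sums gives $\sum_{i=1}^N c_i=0$, using only $m_0=m_N$ and not that the $\Omega_i$ tile $\Omega$. Second, a geometric one about the sensitivities $a_i:=\partial m_i/\partial\varphi_i$ and $b_i:=\partial m_{i-1}/\partial\varphi_i$: since $\Omega_i$ is the part of $\Omega$ whose polar angle about $\mathbf r_i$ lies between $\varphi_i$ and $\varphi_{i+1}$, an increase of $\varphi_i$ deletes an angular wedge from $\Omega_i$ and adjoins the corresponding wedge to $\Omega_{i-1}$; writing out the wedge integral, $a_i=-\int\rho(\mathbf r_i+t\mathbf d_i)\,t\,dt$ taken over the part of the ray at angle $\varphi_i$ inside $\Omega$, and $b_i$ is a positive integral of the same type. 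Provided each $\mathbf r_i$ lies in the interior of $\Omega$ (so the swept wedge has positive area) and $\rho\ge\underline\rho>0$, this yields the strict signs $a_i<0<b_i$; the piecewise smoothness of $\partial\Omega$ assumed in Section~\ref{sec:pro} guarantees these derivatives exist.

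With these in hand, $\dot\varphi_i=0$ reads $c_i a_i+c_{i-1}b_i=0$, i.e.\ $c_i=\gamma_i c_{i-1}$ with $\gamma_i:=-b_i/a_i>0$. Propagating this around the cycle gives $c_i=\Gamma_i c_N$ with $\Gamma_i:=\prod_{k=1}^{i}\gamma_k>0$, and substituting into $\sum_{i=1}^N c_i=0$ forces $\bigl(\sum_{i=1}^{N}\Gamma_i\bigr)c_N=0$ with $\sum_i\Gamma_i>0$, hence $c_N=0$ and therefore $c_i=0$ for every $i$. Thus $2m_i-m_{i-1}-m_{i+1}=0$ for all $i$, i.e.\ $\mathbf m$ lies in the kernel of the cycle-graph Laplacian, which for a connected cycle consists of the constant vectors, so $m_i=m_j$ for all $i,j$. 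Finally, substituting $c_i=0$ into $\dot{\mathbf r}_i=\mathbf 0$ in \eqref{eq:point} collapses it to $2\mathbf r_i-\mathbf r_{i-1}-\mathbf r_{i+1}=\mathbf 0$ for all $i$; applying the same cycle-Laplacian kernel argument coordinate-wise gives $\mathbf r_i=\mathbf r_j$ for all $i,j$. Hence $(\mathbf m,\mathbf r)\in\mathcal M$, which closes the proof.

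I expect the main obstacle to be the second preliminary fact, namely the nondegeneracy and opposite signs of $a_i$ and $b_i$: this is precisely what converts the equilibrium identities into a positive cyclic recursion, and it relies on $\mathbf r_i$ staying in the interior of $\Omega$, on $\rho$ being bounded away from zero, and on a clean definition of the ``angular wedge'' when $\Omega$ is nonconvex and the branch of $\arctan$ wraps around $2\pi$ (so that $\arctan$ should be read as the full polar angle). By contrast, everything afterwards is elementary linear algebra on the cycle graph; it is worth noting that the gradient-flow structure of \eqref{eq:fer}--\eqref{eq:point} with respect to the energy $\tfrac12\sum_i(m_i-m_{i+1})^2+\tfrac12\sum_i\lVert\mathbf r_i-\mathbf r_{i+1}\rVert^2$ does not by itself settle $\mathcal S\subseteq\mathcal M$, since the gradient vanishing at an equilibrium does not force the energy to vanish, so the sign analysis is genuinely required.
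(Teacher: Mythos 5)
Your proof is correct and follows essentially the same route as the paper's: establish $\partial m_i/\partial\varphi_i<0$ and $\partial m_{i-1}/\partial\varphi_i>0$, convert $\dot\varphi_i=0$ into a positive cyclic recursion on the discrete Laplacian terms, conclude these vanish, and then run the cycle-Laplacian kernel argument on $\mathbf r$. The only difference is cosmetic but in your favor: by invoking $\sum_{i=1}^N c_i=0$ uniformly you avoid the paper's case split on whether $\prod_i\zeta_i$ equals $1$, and your explicit caveat that the sign conditions require $\mathbf r_i$ to lie in the interior of $\Omega$ is a hypothesis the paper uses implicitly.
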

\begin{proof}
According to Eq. (1) in \cite{b14}, it follows that
\begin{equation*}
\frac{\partial m_i}{\partial \varphi_i}=-\int_{\mathbf{s}_i(\varphi_i)} \rho(\mathbf{q})\, d\mathbf{q}
\end{equation*}
and
\begin{equation*}
\frac{\partial m_i}{\partial \varphi_{i+1}}=\int_{\mathbf{s}_{i,i+1}(\varphi_{i+1})} \rho(\mathbf{q})\, d\mathbf{q}.
\end{equation*}
Since \(\rho(\mathbf{q}) > 0\) for all \(\mathbf{q}\in\Omega\), one gets
$$
\frac{\partial{m}_i}{\partial\varphi_i}< 0, \quad \frac{\partial m_i}{\partial \varphi_{i+1}} > 0, \quad\forall i \in \mathbb{I}_N.
$$
The condition $\dot{\varphi}_i=0$ is equivalent to
\begin{equation*}
\begin{aligned}
&\left(2m_i - m_{i-1} - m_{i+1}\right) \frac{\partial m_i}{\partial \varphi_i} + \\
&\left(2m_{i-1} - m_{i-2} - m_i\right) \frac{\partial m_{i-1}}{\partial \varphi_i}=0,
\end{aligned}
\end{equation*}
which leads to
\begin{equation*}
\Delta m_i = \zeta_i \Delta m_{i-1}
\end{equation*}
with 
$$ 
\zeta_i = -\dfrac{{\partial m_{i-1}}/{\partial \varphi_i}}{{\partial m_i}/{\partial \varphi_i}} > 0
$$
and $\Delta{m}_i=2m_i-m_{i-1}-m_{i+1}$, $\forall~i\in \mathbb{I}_N$.
Note that
$\Delta m_1 = \zeta_1\Delta m_{N}$ for for \(i = 1\) due to the periodicity of ring-like network topology. Thus, we can conclude that
\begin{equation*}
\Delta m_1 = \prod_{i=1}^{N} \zeta_i \Delta m_{1}.
\end{equation*}
For $\prod_{i=1}^{N}\zeta_i\neq 1$,
one gets \(\Delta{m}_1 = 0\), which implies \(\Delta{m}_i = 0\) and 
$m_i= m_{i+1}$, $\forall i\in\mathbb{I}_N$.
For $\prod_{i=1}^{N} \zeta_i = 1$,
considering that 
\begin{equation*}
\begin{aligned}
\sum_{i=1}^{N} \Delta m_i 
&= \Delta m_1 + \left( \prod_{j=1}^{2} \zeta_j \Delta m_1 \right) + \cdots + \\
&\quad \left( \prod_{j=1}^{i} \zeta_j \Delta m_1 \right) + \cdots + \left( \prod_{j=1}^{N} \zeta_j \Delta m_1 \right) \\
&= \left[ 1 + \prod_{j=1}^{2} \zeta_j + \prod_{j=1}^{3} \zeta_j + \cdots + \prod_{j=1}^{N} \zeta_j \right]\Delta m_1  \\
&= \left[ 1 + \sum_{i=2}^{N} \prod_{j=1}^{i} \zeta_j \right]\Delta m_1  = 0
\end{aligned}
\end{equation*}
and \(\zeta_j > 0\), one gets $\Delta{m}_1=0$ and thus \(\Delta m_i = 0\) due to $\Delta m_i = \zeta_i \Delta m_{i-1}$, \(\forall~i\in \mathbb{I}_N\). 
This implies \(m_i=m_{i+1}, \forall~i \in \mathbb{I}_N\).
Thus, it follows that $\dot{\varphi}_i=0$ leads to \(m_i = m_j , \forall i,j \in \mathbb{I}_N\).
By substituting \(m_i = m_j, \forall i \in \mathbb{I}_N\) into \eqref{eq:point}, one gets
$2\mathbf{r}_{i}-\mathbf{r}_{i+1}-\mathbf{r}_{i-1} = \mathbf{0}$
on the condition of $\dot{\mathbf{r}}_i=\mathbf{0}$.
Define $\Delta \mathbf{r}_i = \mathbf{r}_i - \mathbf{r}_{i-1}$, which leads to $\Delta \mathbf{r}_{i+1} = \Delta \mathbf{r}_i=\mathbf{c}$ and
$\mathbf{r}_i = \mathbf{r}_1 + (i-1)\mathbf{c}$, $\forall i \in \mathbb{I}_N$. 
Under the cyclic boundary condition \(\mathbf{r}_{N+1}=\mathbf{r}_1\) , one gets
$\mathbf{r}_{N+1}=\mathbf{r}_1+N\mathbf{c}=\mathbf{r}_1$,
which implies \(N\mathbf{c}=0\) and thus \(\mathbf{c}=0\). 
Thus, it follows that $\dot{\varphi}_i=0$ and $\dot{\mathbf{r}}_i=\mathbf{0}$ leads to 
$\mathbf{r}_i=\mathbf{r}_j$, $\forall i,j \in \mathbb{I}_N$. 
This allows to obtain
$\mathcal{S}(\mathbf{m}, \mathbf{r})\subseteq\mathcal{M}(\mathbf{m},\mathbf{r})$. Moreover, in light of \eqref{eq:fer} and \eqref{eq:point}, the condtion $\mathbf{r}_i=\mathbf{r}_j$ and $m_i=m_j$ results in $\dot{\varphi}_i=0$ and $\dot{\mathbf{r}}_i=\mathbf{0}$, $\forall i,j \in \mathbb{I}_N$, which indicates $\mathcal{M}(\mathbf{m}, \mathbf{r})\subseteq\mathcal{S}(\mathbf{m},\mathbf{r})$. Thus, one gets
$\mathcal{M}(\mathbf{m}, \mathbf{r})=\mathcal{S}(\mathbf{m},\mathbf{r})$. The proof is completed.
\end{proof}

\begin{lemma}\label{lem:con}
Partition dynamics \eqref{eq:fer} and \eqref{eq:point} enables the state trajectory of vector pair $(\mathbf{m},\mathbf{r})$ converging into 
the set \( \mathcal{M}(\mathbf{m},\mathbf{r})\).
\end{lemma}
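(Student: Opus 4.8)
The plan is to recognize the partition dynamics \eqref{eq:fer}--\eqref{eq:point} as a gradient flow of a single energy functional and then close the argument with LaSalle's invariance principle, using Lemma~\ref{lem:seteq} to identify the limit set. As the Lyapunov candidate I would take
\[
V(\boldsymbol{\varphi},\mathbf{r}) = \frac{1}{2}\sum_{i=1}^{N}\bigl(m_i - m_{i-1}\bigr)^{2} + \frac{1}{2}\sum_{i=1}^{N}\bigl\lVert \mathbf{r}_i - \mathbf{r}_{i-1}\bigr\rVert^{2},
\]
with the cyclic convention $m_0=m_N$, $\mathbf{r}_0=\mathbf{r}_N$; the first sum is the workload-imbalance energy on the ring graph, the second the reference-point disagreement.

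The key algebraic step is to verify that \eqref{eq:fer} and \eqref{eq:point} are exactly $\dot{\varphi}_i=-\kappa_{\varphi}\,\partial V/\partial\varphi_i$ and $\dot{\mathbf{r}}_i=-\kappa_r\,\partial V/\partial\mathbf{r}_i$. For this I would use the Leibniz-type boundary formulas $\partial m_i/\partial\varphi_i=-\int_{\mathbf{s}_i}\rho$ and $\partial m_i/\partial\varphi_{i+1}=\int_{\mathbf{s}_{i,i+1}}\rho$ recalled from \cite{b14}, together with the observation that $m_k$ depends on $\varphi_i$ only for $k\in\{i-1,i\}$ and on $\mathbf{r}_i$ only for $k=i$. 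A short computation then gives
\[
\frac{\partial V}{\partial \varphi_i} = \Delta m_i\,\frac{\partial m_i}{\partial \varphi_i} + \Delta m_{i-1}\,\frac{\partial m_{i-1}}{\partial \varphi_i},
\]
\[
\frac{\partial V}{\partial \mathbf{r}_i} = \Delta m_i\,\frac{\partial m_i}{\partial \mathbf{r}_i} + \bigl(2\mathbf{r}_i - \mathbf{r}_{i-1} - \mathbf{r}_{i+1}\bigr),
\]
with $\Delta m_i = 2m_i-m_{i-1}-m_{i+1}$, which match the bracketed terms of \eqref{eq:fer}--\eqref{eq:point} (note $\partial V/\partial\varphi_i=\partial/\partial\varphi_i$ of the first sum only, since the second sum is $\boldsymbol{\varphi}$-independent). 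Consequently, along trajectories,
\[
\dot V = -\kappa_{\varphi}\sum_{i=1}^{N}\Bigl(\frac{\partial V}{\partial \varphi_i}\Bigr)^{2} - \kappa_r\sum_{i=1}^{N}\Bigl\lVert\frac{\partial V}{\partial \mathbf{r}_i}\Bigr\rVert^{2} \le 0,
\]
and $\dot V=0$ holds precisely when every partial vanishes, i.e.\ when $\dot{\varphi}_i=0$ and $\dot{\mathbf{r}}_i=\mathbf{0}$ for all $i$, which is membership in $\mathcal{S}(\mathbf{m},\mathbf{r})$.

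Next I would establish precompactness so that LaSalle applies: the phase angles evolve on the compact torus $(\mathbb{R}/2\pi\mathbb{Z})^{N}$, the workloads $m_i$ are bounded (since $\rho$ is bounded above and $\Omega$ has finite area), and monotonicity of $V$ together with its second sum bounds every consecutive difference $\lVert\mathbf{r}_i-\mathbf{r}_{i-1}\rVert$, hence the full relative configuration of the reference points. Taking the reference points to lie in the compact region $\Omega$ then pins the remaining translational degree of freedom and yields a compact positively invariant sublevel set. LaSalle's invariance principle gives that $(\mathbf{m}(t),\mathbf{r}(t))$ approaches the largest invariant subset of $\{\dot V=0\}$, which is $\mathcal{S}(\mathbf{m},\mathbf{r})$ itself because every point there is an equilibrium of \eqref{eq:fer}--\eqref{eq:point}. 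Finally, Lemma~\ref{lem:seteq} identifies $\mathcal{S}(\mathbf{m},\mathbf{r})=\mathcal{M}(\mathbf{m},\mathbf{r})$, so $\mathrm{dist}\bigl((\mathbf{m}(t),\mathbf{r}(t)),\,\mathcal{M}(\mathbf{m},\mathbf{r})\bigr)\to 0$, which is the claim.

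The main obstacle I anticipate is the precompactness requirement, specifically ruling out a drift of the common centroid of the reference points: unlike the phase angles, the $\mathbf{r}_i$ are not a priori confined to a compact set, and $V$ penalizes only their \emph{relative} displacements. This will either rest on a standing assumption that each $\mathbf{r}_i$ remains in $\Omega$ (plus a brief forward-invariance argument, which is itself delicate when $\Omega$ is nonconvex), or on an additional estimate showing that $\sum_i\partial V/\partial\mathbf{r}_i$ is integrable along trajectories so that the centroid converges. A secondary, more routine, point is to justify $C^{1}$ dependence of each $m_i$ on $(\boldsymbol{\varphi},\mathbf{r})$ and to dispose of degenerate configurations (empty or wrap-around subregions), for which one leans on the regularity already invoked through \cite{b14}.
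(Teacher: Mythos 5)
Your proof follows essentially the same route as the paper: the identical Lyapunov function (your $m_i-m_{i-1}$ indexing gives the same cyclic sum as the paper's $m_i-m_{i+1}$), the same computation showing $\dot V\le 0$ (the paper carries out the summation-by-parts explicitly rather than invoking the gradient-flow structure, but the algebra is the same), and the same appeal to LaSalle together with Lemma~\ref{lem:seteq}. Your concluding concern about precompactness of the $\mathbf{r}_i$ trajectories is legitimate and is a point the paper's proof passes over in silence, but it does not alter the approach.
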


\begin{proof}
According to the Leibniz integral rule~\cite{b16}, the time derivative of $m_i$ is given by
\begin{align*}
\dot{m}_i &= \frac{\partial m_i}{\partial \mathbf{r}_i}\dot{\mathbf{r}}_i + \frac{\partial m_i}{\partial \varphi_i} \dot{\varphi}_i+\frac{\partial m_i}{\partial \varphi_{i+1}}\dot{\varphi}_{i+1} \\
&= \int_{\partial \Omega_i(t)} \rho(\mathbf{q}) \mathbf{n}_i^T \frac{\partial q}{\partial \mathbf{r}_i} d\mathbf{q}\dot{\mathbf{r}}_i + \int_{\partial \Omega_i(t)} \rho(\mathbf{q}) \mathbf{n}_i^T \frac{\partial q}{\partial \varphi_i} d\mathbf{q}\dot{\varphi}_i \\&+ \int_{\partial \Omega_i(t)} \rho(\mathbf{q}) \mathbf{n}_i^T \frac{\partial q}{\partial \varphi_{i+1}} d\mathbf{q}\dot{\varphi}_{i+1},
\end{align*}
where \(\partial \Omega_i\) represents the boundary of the region \(\Omega_i\), and \(\mathbf{n}_i\)  denotes the unit outward normal vectors on \(\partial \Omega_i\).
Construct the following candidate Lyapunov function
\begin{equation*}
V =\frac{1}{2}\sum_{i=1}^N (m_i- m_{i+1})^2 + \frac{1}{2} \sum_{i=1}^N \left\|\mathbf{r}_i-\mathbf{r}_{i+1}\right\|^2.
\end{equation*}  
Then Its time derivative can be expressed as
\begin{align}
\dot{V} 
&= \sum_{i=1}^N (m_i - m_{i+1})(\dot{m}_i - \dot{m}_{i+1}) \notag\\
&+ \sum_{i=1}^N (\mathbf{r}_i - \mathbf{r}_{i+1})^T (\dot{\mathbf{r}}_i - \dot{\mathbf{r}}_{i+1}) \notag\\
&= \sum_{i=1}^N (2m_i - m_{i-1} - m_{i+1}) \dot{m}_i \notag\\
&+ \sum_{i=1}^N (2\mathbf{r}_i - \mathbf{r}_{i+1} - \mathbf{r}_{i-1})^T \dot{\mathbf{r}}_i \notag\\
&=\sum_{i=1}^N \Big[ 
(2m_i - m_{i-1} - m_{i+1}) \frac{\partial m_i}{\partial \varphi_i} \notag\\
&+ (2m_{i-1} - m_{i-2} - m_i) \frac{\partial m_{i-1}}{\partial \varphi_i} 
\Big] \dot{\varphi}_i \notag\\
&+ \sum_{i=1}^N \Big[ 
(2m_i - m_{i-1} - m_{i+1}) \frac{\partial m_i}{\partial \mathbf{r}_i}\notag\\
&+ (2\mathbf{r}_i - \mathbf{r}_{i+1} - \mathbf{r}_{i-1})^T
\Big] \dot{\mathbf{r}}_i \notag\\
&= -k_{\varphi} \sum_{i=1}^N \Big( 
(2m_i - m_{i-1} - m_{i+1}) \frac{\partial m_i}{\partial \varphi_i} \notag\\
&+ (2m_{i-1} - m_{i-2} - m_i) \frac{\partial m_{i-1}}{\partial \varphi_i} 
\Big)^2 \notag\\
&- k_r \sum_{i=1}^N \big\| 
(2m_i - m_{i-1} - m_{i+1}) \frac{\partial m_i}{\partial \mathbf{r}_i} \notag \\ 
&+ (2\mathbf{r}_i - \mathbf{r}_{i+1} - \mathbf{r}_{i-1})^T
\big\|^2\notag \leq 0\notag.
\end{align}
In light of LaSalle invariance principle, the state trajectories of the dynamical system \eqref{eq:fer} and \eqref{eq:point}  will converge to the largest invariant set 
$\mathcal{S}(\mathbf{m},\mathbf{r}) = \{ (\mathbf{m},\mathbf{r}) \mid \dot{V} = 0 \}$. It follows from $\mathcal{S}(\mathbf{m},\mathbf{r}) = \mathcal{M}(\mathbf{m}, \mathbf{r})$ in Lemma~\ref{lem:seteq} that the system converges to the configuration of 
$m_i = m_j$, $\mathbf{r}_i = \mathbf{r}_j$, $\forall~i,j\in \mathbb{I}_N$.
\end{proof}
    
Define the optimal position of each agent in its own subregion as
$$
\mathbf{p}_i^{*}=\arg\inf_{\mathbf{p}_i\in\Pi_i}J_{\Omega_i}
$$
with the local cost function
$J_{\Omega_i} = \int_{\Omega_i} f(\mathbf{p}_i, \mathbf{q})\,\rho(\mathbf{q})\,d\mathbf{q}$
and the admissible set
$\Pi_i=\{\mathbf{p}_i\in\Omega_i\mid\nabla_{\mathbf{p}_i} J_{\Omega_i}=0\}$.
Let \(\mathbf{H}_{J,\mathbf{p}_i}(\mathbf{p}_i^*)\) denote the Hessian matrix of \(J_{\Omega_i}\) with respect to \(\mathbf{p}_i\) evaluated at 
\(\mathbf{p}_i^*\). 

\begin{lemma}\label{lem:opt}
If $\text{rank}[\mathbf{H}_{J,\mathbf{p}_{i}}(\mathbf{p}_{i}^*)]=2$, one has
$\lim_{t\to\infty}\dot{\mathbf{p}}_i^*(t)=\mathbf{0}, \forall~i\in\mathbb{I}_N$.
\end{lemma}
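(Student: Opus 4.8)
The plan is to regard $\mathbf{p}_i^{*}$ as an implicitly defined function of the partition variables and then propagate the convergence from Lemma~\ref{lem:con} through the chain rule. The only time dependence of the local cost $J_{\Omega_i}=\int_{\Omega_i}f(\mathbf{p}_i,\mathbf{q})\rho(\mathbf{q})\,d\mathbf{q}$ enters through the domain $\Omega_i$, which by construction is parametrized solely by $(\varphi_i,\varphi_{i+1},\mathbf{r}_i)$. Hence $\mathbf{p}_i^{*}$ is determined by these three quantities alone, and it suffices to establish two facts: (i) $\dot{\varphi}_i(t)\to 0$, $\dot{\varphi}_{i+1}(t)\to 0$ and $\dot{\mathbf{r}}_i(t)\to\mathbf{0}$; and (ii) the map $(\varphi_i,\varphi_{i+1},\mathbf{r}_i)\mapsto\mathbf{p}_i^{*}$ is continuously differentiable with Jacobian bounded along the trajectory.

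For step (i), Lemma~\ref{lem:con} gives that $(\mathbf{m}(t),\mathbf{r}(t))$ approaches $\mathcal{M}(\mathbf{m},\mathbf{r})$, and by Lemma~\ref{lem:seteq} this set coincides with $\mathcal{S}(\mathbf{m},\mathbf{r})$, on which the right-hand sides of \eqref{eq:fer} and \eqref{eq:point} vanish. Those right-hand sides are continuous functions of $(\boldsymbol{\varphi},\mathbf{r})$, since $m_i$, $\partial m_i/\partial\varphi_i$ and $\partial m_i/\partial\mathbf{r}_i$ depend continuously on the partition variables; the state trajectory stays in a compact set because $\Omega$ is bounded, the angles lie in $[0,2\pi)$, and $V$ is nonincreasing. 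A standard continuity argument then yields $\dot{\varphi}_i(t)\to 0$ and $\dot{\mathbf{r}}_i(t)\to\mathbf{0}$ for every index, in particular for $i+1$.

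For step (ii), set $\mathbf{G}_i(\mathbf{p}_i;\varphi_i,\varphi_{i+1},\mathbf{r}_i):=\nabla_{\mathbf{p}_i}J_{\Omega_i}$, so that $\mathbf{p}_i^{*}$ solves $\mathbf{G}_i=\mathbf{0}$ by the definition of $\Pi_i$, and $\partial\mathbf{G}_i/\partial\mathbf{p}_i=\mathbf{H}_{J,\mathbf{p}_i}(\mathbf{p}_i^{*})$. The hypothesis $\text{rank}[\mathbf{H}_{J,\mathbf{p}_i}(\mathbf{p}_i^{*})]=2$ makes this Hessian invertible, so the implicit function theorem provides a $C^1$ map $(\varphi_i,\varphi_{i+1},\mathbf{r}_i)\mapsto\mathbf{p}_i^{*}$ with $\partial\mathbf{p}_i^{*}/\partial\varphi_i=-\mathbf{H}_{J,\mathbf{p}_i}^{-1}(\mathbf{p}_i^{*})\,\partial\mathbf{G}_i/\partial\varphi_i$ and analogous expressions for $\varphi_{i+1}$ and $\mathbf{r}_i$. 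The domain-derivative terms $\partial\mathbf{G}_i/\partial(\cdot)$ are obtained by differentiating the region integral via the Leibniz rule, exactly as in the proof of Lemma~\ref{lem:con}, and are continuous, hence bounded on the compact set carrying the trajectory. Applying the chain rule,
$$\dot{\mathbf{p}}_i^{*}=\frac{\partial\mathbf{p}_i^{*}}{\partial\varphi_i}\dot{\varphi}_i+\frac{\partial\mathbf{p}_i^{*}}{\partial\varphi_{i+1}}\dot{\varphi}_{i+1}+\frac{\partial\mathbf{p}_i^{*}}{\partial\mathbf{r}_i}\dot{\mathbf{r}}_i,$$
and combining the boundedness of the coefficients with $\dot{\varphi}_i,\dot{\varphi}_{i+1},\dot{\mathbf{r}}_i\to 0$ yields $\dot{\mathbf{p}}_i^{*}(t)\to\mathbf{0}$.

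The main obstacle is the regularity bookkeeping in step (ii): one must argue that $\mathbf{p}_i^{*}$ stays in the interior of $\Omega_i$ so the stationarity condition genuinely characterizes it, that the rank-two condition holds along the whole trajectory and not merely at the limit point so that no branch-switching of the isolated critical point (nor a tie in the $\arg\inf$) occurs, and that the Leibniz-type derivatives $\partial\mathbf{G}_i/\partial(\cdot)$ are uniformly bounded. Once the trajectory is confined to a compact parameter set on which all these continuous quantities are bounded, the chain-rule conclusion is routine.
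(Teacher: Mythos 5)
Your proposal is correct and follows essentially the same route as the paper: differentiating the stationarity condition $\nabla_{\mathbf{p}_i}J(\mathbf{p}_i^*)=\mathbf{0}$ (equivalently, applying the implicit function theorem), inverting the full-rank Hessian, bounding the Leibniz-rule boundary integrals, and invoking Lemma~\ref{lem:con} to send $\dot{\varphi}_i$, $\dot{\varphi}_{i+1}$ and $\dot{\mathbf{r}}_i$ to zero. Your closing remarks on regularity (interior critical point, persistence of the rank condition along the trajectory, compactness of the parameter set) flag genuine gaps that the paper's own proof leaves implicit, but the core argument is the same.
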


\begin{proof}
The time derivative of \(\nabla_{\mathbf{p}_{i}} J(\mathbf{p}_{i}^*)= 0\) on both sides leads to
\begin{align*}
\frac{d}{dt} \nabla_{\mathbf{p}_{i}} J(\mathbf{p}_{i}^*) &= \mathbf{H}_{J,\mathbf{p}_{i}}(\mathbf{p}_{i}^*) \dot{\mathbf{p}}_i^* + \frac{\partial \nabla_{\mathbf{p}_{i}} J\mathbf{p}_{i}^*)}{\partial \mathbf{r}_i} \dot{\mathbf{r}}_i \\
&\quad + \frac{\partial \nabla_{\mathbf{p}_{i}} J(\mathbf{p}_{i}^*)}{\partial \varphi_{i+1}} \dot{\varphi}_{i+1} + \frac{\partial \nabla_{\mathbf{p}_{i}} J(\mathbf{p}_{i}^*)}{\partial\varphi_i} \dot{\varphi}_i 
= 0.
\end{align*}  
If \( \mathbf{H}_{J,\mathbf{p}_{i}}(\mathbf{p}_{i}^*) \) is full rank, one gets
\begin{align*}
\dot{\mathbf{p}}_i^* = - \mathbf{H}_{J,\mathbf{p}_{i}}^{-1}(\mathbf{p}_{i}^*) 
\Bigg(&
\frac{\partial \nabla_{\mathbf{p}_{i}} J(\mathbf{p}_{i}^*)}{\partial \mathbf{r}_i} \dot{\mathbf{r}}_i 
+ \frac{\partial \nabla_{\mathbf{p}_{i}} J(\mathbf{p}_{i}^*)}{\partial \varphi_{i+1}} \dot{\varphi}_{i+1} \notag \\
&+ \frac{\partial \nabla_{\mathbf{p}_{i}} J(\mathbf{p}_{i}^*)}{\partial \varphi_i} \dot{\varphi}_i
\Bigg).
\end{align*}
It follows from Lemma~\ref{lem:con} with \eqref{eq:fer} and \eqref{eq:point} that \( \lim_{t \to \infty} \dot{\varphi}_i(t) = 0 \) and \( \lim_{t \to \infty} \dot{\mathbf{r}}_i(t) = \mathbf{0} \). 
By applying the Leibniz rule to parameter-dependent domains, the partial derivatives of
\(\nabla_{\mathbf{p}_i} J(\mathbf{p}_i)\)  evaluated at \(\mathbf{p}_i^*\) can be described as follows 
\[
\frac{\partial \nabla_{p_i} J(\mathbf{p}_i^{*})}{\partial \mathbf{r}_i}
= \int_{\partial \Omega_i} \nabla_{p_i} f\big(\mathbf{p}_i^{*},\mathbf{q}\big)\,\rho(\mathbf{q})\,
 \mathbf{n}_i(\mathbf{q})^\top \tfrac{\partial \mathbf{q}}{\partial \mathbf{r}_i} \, d\mathbf{q},
\]
\[
\frac{\partial \nabla_{p_i} J(\mathbf{p}_i^{*})}{\partial \varphi_i}
= \int_{\partial \Omega_i} \nabla_{p_i} f\big(\mathbf{p}_i^{*},\mathbf{q}\big)\,\rho(\mathbf{q})\,
 \mathbf{n}_i(\mathbf{q})^\top \tfrac{\partial \mathbf{q}}{\partial \varphi_i} \, d\mathbf{q}
\]
and
\[
\frac{\partial \nabla_{p_i} J(\mathbf{p}_i^{*})}{\partial \varphi_{i+1}}
= \int_{\partial \Omega_i} \nabla_{p_i} f\big(\mathbf{p}_i^{*},\mathbf{q}\big)\,\rho(\mathbf{q})\,
 \mathbf{n}_i(\mathbf{q})^\top \tfrac{\partial\mathbf{q}}{\partial \varphi_{i+1}} \, d\mathbf{q},
\]
where
\(\mathbf{n}_i(\mathbf{q})\) is the outward unit normal at \(q\in\partial\Omega_i\).
The terms \({\partial\mathbf{q}}/{\partial \mathbf{r}_i}\), \({\partial\mathbf{q}}/{\partial \varphi_i}\)
and \({\partial\mathbf{q}}/{\partial\varphi_{i+1}}\) describe how boundary points move
when the reference point \(\mathbf{r}_i\) or the phase angles \(\varphi_i\) and \(\varphi_{i+1}\) vary.
On the partition segment $\mathbf{s}_i=\mathbf{r}_i+\kappa\mathbf{d}_i$, one has
\[
\frac{\partial \mathbf{q}}{\partial \mathbf{r}_i}=\mathbf{I}_2,\qquad
\frac{\partial \mathbf{q}}{\partial\varphi_i}=\kappa\begin{bmatrix}-\sin\varphi_i\\\cos\varphi_i\end{bmatrix},
\]
and $\partial\mathbf{q}/\partial\varphi_{i+1}=0$ on $\mathbf{s}_i$. 
The same results hold on $\mathbf{s}_{i,i+1}$. 
In addition, each integrand in the boundary integrals is uniformly bounded on \(\partial\Omega_i\). Since \(\partial\Omega_i\) has finite length, the boundary integrals are finite. Thus, ${\partial \nabla_{p_i} J(\mathbf{p}_i^{*})}/{\partial \mathbf{r}_i}$,
${\partial \nabla_{p_i} J(\mathbf{p}_i^{*})}/{\partial \varphi_i}$ and
${\partial \nabla_{p_i} J(\mathbf{p}_i^{*})}/{\partial \varphi_{i+1}}$ are all bounded, which indicates $\lim_{t\to\infty}\dot{\mathbf{p}}_i^*(t)=\mathbf{0}$. The proof is thus completed.
\end{proof}

\subsection{Design of Coverage  Controller}\label{B}        
In the following, a distributed control strategy is proposed to guide each agent toward a locally optimal position that collectively reduces the global coverage cost. 

\begin{theorem}\label{1}
For multi-agent system~\eqref{eq:agent} with partition dynamics \eqref{eq:fer} and \eqref{eq:point}, the control law
\begin{equation}\label{control}  
\mathbf{u}_i = -\kappa_p \cdot (\mathbf{p}_i - \mathbf{p}_i^*),  \quad \kappa_p>0
\end{equation}
leads to $\lim_{t\to\infty} \|\mathbf{p}_i(t)- \mathbf{p}_i^*(t)\|=0$, $\forall~i\in\mathbb{I}_N$.
\end{theorem}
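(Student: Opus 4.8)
The plan is to analyze the closed-loop error dynamics $\mathbf{e}_i(t) := \mathbf{p}_i(t) - \mathbf{p}_i^*(t)$ directly. Differentiating and substituting the agent dynamics \eqref{eq:agent} together with the control law \eqref{control} gives $\dot{\mathbf{e}}_i = \dot{\mathbf{p}}_i - \dot{\mathbf{p}}_i^* = -\kappa_p \mathbf{e}_i - \dot{\mathbf{p}}_i^*$. This is a linear time-varying system driven by the exogenous signal $-\dot{\mathbf{p}}_i^*(t)$, which Lemma~\ref{lem:opt} tells us vanishes asymptotically, i.e. $\lim_{t\to\infty}\dot{\mathbf{p}}_i^*(t)=\mathbf{0}$ (invoking the rank condition on the Hessian, which we carry over as a standing assumption of the theorem).

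The key step is then to argue that a stable scalar-gain linear system perturbed by a vanishing input has a vanishing state. I would make this precise either by the variation-of-constants formula, writing
\begin{equation*}
\mathbf{e}_i(t) = e^{-\kappa_p t}\mathbf{e}_i(0) - \int_0^t e^{-\kappa_p(t-\tau)}\dot{\mathbf{p}}_i^*(\tau)\,d\tau,
\end{equation*}
and then showing the convolution term tends to zero (split the integral at a time $T$ beyond which $\|\dot{\mathbf{p}}_i^*\|<\epsilon$, bound the tail by $\epsilon/\kappa_p$ and the head by an exponentially decaying factor), or alternatively via a Lyapunov argument using $W_i = \tfrac{1}{2}\|\mathbf{e}_i\|^2$, whose derivative is $\dot{W}_i = -\kappa_p\|\mathbf{e}_i\|^2 - \mathbf{e}_i^\top\dot{\mathbf{p}}_i^*$, and then applying an input-to-state stability / converging-input–converging-state argument. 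The first-mentioned estimate approach is the cleanest and avoids extra machinery.

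The main obstacle — or rather, the point that needs the most care — is that $\mathbf{p}_i^*(t)$ is defined only implicitly via $\nabla_{\mathbf{p}_i}J_{\Omega_i}=0$, so its differentiability and the very existence of $\dot{\mathbf{p}}_i^*$ rests on the implicit function theorem, which is exactly where the rank-2 Hessian hypothesis enters; this is precisely what Lemma~\ref{lem:opt} establishes, so I would cite it rather than reprove it. A secondary subtlety is that the bound on the convolution tail requires knowing $\dot{\mathbf{p}}_i^*$ is not merely asymptotically vanishing but also, say, bounded on the initial transient — which again follows from the boundedness of the boundary integrals established inside the proof of Lemma~\ref{lem:opt}, since $\dot\varphi_i$ and $\dot{\mathbf{r}}_i$ are continuous and bounded. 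With these observations in hand, concluding $\lim_{t\to\infty}\|\mathbf{p}_i(t)-\mathbf{p}_i^*(t)\|=0$ for all $i\in\mathbb{I}_N$ is immediate, completing the proof.
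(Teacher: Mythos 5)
Your proposal is correct and follows essentially the same route as the paper: define the error $\mathbf{e}_i=\mathbf{p}_i-\mathbf{p}_i^*$, write the forced linear error dynamics, integrate via variation of constants, and conclude from Lemma~\ref{lem:opt} that the convolution term vanishes. The only difference is cosmetic --- the paper evaluates the limit of the convolution integral by L'Hopital's rule, whereas you use the standard $\epsilon$-splitting estimate (which is arguably the cleaner and more careful of the two, since you also flag the boundedness of $\dot{\mathbf{p}}_i^*$ on the transient).
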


\begin{proof}
Define the position error $\mathbf{e}_i(t)=\mathbf{p}_i(t)-\mathbf{p}_i^*(t). $ From the agent dynamics \eqref{eq:agent} with the control law \eqref{control} we have 
$\dot{\mathbf{p}}_i(t)=-\kappa_p \mathbf{e}_i(t), $
 hence the error dynamics is 
 \[
 \dot{\mathbf{e}}_i(t)=\dot{\mathbf{p}}_i(t)-\dot{\mathbf{p}}_i^*(t)=-\kappa_p\mathbf{e}_i(t)-
 \dot{\mathbf{p}}_i^*(t). 
 \] 
Integrating the error dynamics gives
\[\mathbf{e}_i(t) = \mathbf{e}_i(0)e^{-\kappa_p t}- \int_0^t e^{-\kappa_p (t - \tau)} \dot{\mathbf{p}}_i^*(\tau) \, d\tau. \] 
Since
\begin{align*}
\quad\int_0^t e^{-\kappa_p (t-\tau)} \dot{\mathbf{p}}_i^*(\tau)\, d\tau 
&= e^{-\kappa_pt}\int_0^t e^{\kappa_p\tau} \dot{\mathbf{p}}_i^*(\tau)\, d\tau \notag \\
&=\frac{\int_0^T e^{\kappa_p\tau} \dot{\mathbf{p}}_i^*(\tau)\, d\tau}{e^{\kappa_pt}},
\end{align*} 
one has
\begin{align*}
\lim_{t\to+\infty}\quad\int_0^t e^{-\kappa_p (t-\tau)} \dot{\mathbf{p}}_i^*(\tau)\, d\tau & = \lim_{t\to+\infty} \frac{\int_0^t e^{\kappa_p\tau} \dot{\mathbf{p}}_i^*(\tau)\, d\tau}{e^{\kappa_pt}}\\
&=\lim_{t\to+\infty}\frac{\dot{\mathbf{p}}_i^*(t)}{\kappa_p}=\mathbf{0}
\end{align*}
according to Lemma~\ref{lem:opt} and L'Hopital's rule.
Considering that $\lim_{t\to+\infty}\mathbf{e}_i(0)e^{-\kappa_p t}=\mathbf{0}$, one gets $\lim_{t\to\infty}\mathbf{e}_i(t)=\mathbf{0}$, which implies $\lim_{t\to\infty}\|\mathbf{p}_i(t)-\mathbf{p}_i^*(t) \|=0$, 
$\forall i\in\mathbb{I}_N$. This completes the proof.
\end{proof}

The above results can be employed to solve the classic location optimization problem~\cite{b4}, where the function 
$f(\mathbf{p}_i,\mathbf{q}) = \|\mathbf{p}_i-\mathbf{q}\|^2$.  
According to \eqref{eq:11}, this allows to get the following performance index
\begin{equation*}
J(\boldsymbol{\varphi}, \mathbf{p},\mathbf{r}) = \sum_{i=1}^N \int_{\Omega_i}\|\mathbf{p}_i-\mathbf{q}\|^2\rho(\mathbf{q}) \, d\mathbf{q}  
\end{equation*}
For the above location optimization problem, one can obtain two corollaries as follows. 

\begin{corollary}\label{lemma4}
For the partition dynamics \eqref{eq:fer} and \eqref{eq:point} , one gets $\lim_{t \to \infty} \dot{c}_{\Omega _i}(t)=\mathbf{0}, \forall i \in \mathbb{I}_N$.
\end{corollary}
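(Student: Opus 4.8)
The plan is to specialize Lemma~\ref{lem:opt} to the quadratic service cost $f(\mathbf{p}_i,\mathbf{q})=\|\mathbf{p}_i-\mathbf{q}\|^2$, for which the locally optimal point $\mathbf{p}_i^*$ is precisely the density-weighted centroid $c_{\Omega_i}=\frac{1}{m_i}\int_{\Omega_i}\mathbf{q}\,\rho(\mathbf{q})\,d\mathbf{q}$. First I would compute $\nabla_{\mathbf{p}_i}J_{\Omega_i}=2\int_{\Omega_i}(\mathbf{p}_i-\mathbf{q})\rho(\mathbf{q})\,d\mathbf{q}=2\big(m_i\mathbf{p}_i-\int_{\Omega_i}\mathbf{q}\,\rho(\mathbf{q})\,d\mathbf{q}\big)$, so that the stationarity condition $\nabla_{\mathbf{p}_i}J_{\Omega_i}=0$ defining $\Pi_i$ has the unique solution $\mathbf{p}_i^*=c_{\Omega_i}$; here $m_i>0$ because $\rho>0$ on $\Omega$ and $\Omega_i$ has positive measure, so the division is legitimate.

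Next I would verify the rank hypothesis of Lemma~\ref{lem:opt}. Differentiating once more gives $\mathbf{H}_{J,\mathbf{p}_i}(\mathbf{p}_i^*)=2m_i\mathbf{I}_2$, which is positive definite and hence $\operatorname{rank}[\mathbf{H}_{J,\mathbf{p}_i}(\mathbf{p}_i^*)]=2$ for every $i\in\mathbb{I}_N$. Lemma~\ref{lem:opt} then applies verbatim and yields $\lim_{t\to\infty}\dot{\mathbf{p}}_i^*(t)=\mathbf{0}$. Since $\mathbf{p}_i^*=c_{\Omega_i}$, this is exactly the claimed $\lim_{t\to\infty}\dot{c}_{\Omega_i}(t)=\mathbf{0}$ for all $i\in\mathbb{I}_N$, which completes the argument.

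I do not expect a genuine obstacle here: the reasoning is essentially a one-line substitution into the already-established Lemma~\ref{lem:opt}, and the time-variation of $c_{\Omega_i}$ is controlled because the reference points and phase angles converge (Lemma~\ref{lem:con}) and the boundary-integral expressions for $\partial\nabla_{\mathbf{p}_i}J/\partial\mathbf{r}_i$, $\partial\nabla_{\mathbf{p}_i}J/\partial\varphi_i$, $\partial\nabla_{\mathbf{p}_i}J/\partial\varphi_{i+1}$ are bounded over the finite-length boundary $\partial\Omega_i$. The only mild point worth a sentence is confirming that $\mathbf{p}_i^*=c_{\Omega_i}$ is indeed the admissible optimum, i.e.\ that $\Pi_i$ collapses to this single centroid and the identification is smooth in $(\mathbf{r},\boldsymbol{\varphi})$; this is immediate from the strict convexity of $J_{\Omega_i}$ in $\mathbf{p}_i$ (the Hessian $2m_i\mathbf{I}_2$ is constant and positive definite), so no extra regularity work is needed.
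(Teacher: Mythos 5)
Your proposal is correct, but it takes a different route from the paper. You prove the corollary by identifying the density-weighted centroid $c_{\Omega_i}$ with the optimizer $\mathbf{p}_i^*$ of the quadratic cost $f(\mathbf{p}_i,\mathbf{q})=\|\mathbf{p}_i-\mathbf{q}\|^2$ (computing $\nabla_{\mathbf{p}_i}J_{\Omega_i}=2m_i(\mathbf{p}_i-c_{\Omega_i})$ and $\mathbf{H}_{J,\mathbf{p}_i}=2m_i\mathbf{I}_2$, so the rank-$2$ hypothesis holds) and then invoking Lemma~\ref{lem:opt} as a black box. The paper instead argues directly and without any reference to $f$ or to Lemma~\ref{lem:opt}: it writes $c_{\Omega_i}$ componentwise as the ratio of the moment integrals $m_{i,x}=\int_{\Omega_i}x\,\rho\,d\mathbf{q}$ (and $m_{i,y}$) to $m_i$, shows via the Leibniz rule, the boundedness of the boundary integrals, and $\dot{\varphi}_i\to 0$, $\dot{\mathbf{r}}_i\to\mathbf{0}$ from Lemma~\ref{lem:con} that $\dot{m}_i\to 0$ and $\dot{m}_{i,x}\to 0$, and concludes by the quotient rule. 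Your argument is shorter and reuses established machinery (the identification $\mathbf{p}_i^*=c_{\Omega_i}$ is in fact exactly what the paper establishes in the \emph{next} corollary), but it is tied to the quadratic cost and inherits whatever regularity Lemma~\ref{lem:opt} needs (in particular boundedness of $\mathbf{H}^{-1}_{J,\mathbf{p}_i}=\tfrac{1}{2m_i}\mathbf{I}_2$, which holds here since $m_i$ stays bounded away from zero). The paper's computation is more elementary and shows the statement is really a fact about the partition dynamics alone: the centroid velocity vanishes asymptotically for any positive density, independently of the service cost. Both proofs are valid.
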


\begin{proof}
According to Lemma~\ref{lem:con}, one has
$\lim_{t \to \infty} \dot{\varphi}_i(t)=0$,  
$\lim_{t \to \infty} \dot{\mathbf{r}}_i(t)=\mathbf{0}$,  $\forall~i\in \mathbb{I}_N$.
Then the time derivative of \(m_i\) is expressed as
\[
\dot{m}_i = \frac{\partial m_i}{\partial \mathbf{r}_i} \dot{\mathbf{r}}_i + \frac{\partial m_i}{\partial \varphi_i} \dot{\varphi}_i + \frac{\partial m_i}{\partial \varphi_{i+1}} \dot{\varphi}_{i+1}.
\]
Since the coverage region $\Omega_i(t)$ is bounded and the integrand 
$\rho(\mathbf{q})\,\mathbf{n}_i^\top \frac{\partial q}{\partial \mathbf{r}_i}$ 
is bounded over $\partial \Omega_i(t)$, the integral
\[
\frac{\partial m_i}{\partial \mathbf{r}_i} = \int_{\partial \Omega_i(t)} \rho(\mathbf{q})\,\mathbf{n}_i^\top \frac{\partial q}{\partial \mathbf{r}_i}\,d\mathbf{q}
\]
is bounded. Similarly, the partial derivatives with respect to the phase angles
${\partial m_i}/{\partial\varphi_i}$ and
${\partial m_i}/{\partial\varphi_{i+1}}$,
are also bounded. Thus, one obtains $\lim_{t\to\infty} \dot{m}_i(t)=0$.
For simplicity, introduce the notation
\[
m_{i,x} = \int_{\Omega_i} x \, \rho(\mathbf{q}) \, d\mathbf{q},
\]
which leads to
\[
\dot{m}_{i,x} = \frac{\partial m_{i,x}}{\partial \mathbf{r}_i} 
\dot{\mathbf{r}}_i + \frac{\partial m_{i,x}}{\partial \varphi_i} \dot{\varphi}_i + \frac{\partial m_{i,x}}{\partial \varphi_{i+1}}\dot{\varphi}_{i+1}
\]
and
$\lim_{t \to \infty} \dot{m}_{i,x}(t) = 0$.
Thus, it follows that
\[
\lim_{t \to \infty} \dot{c}_{\Omega_i}^x(t) = \lim_{t \to \infty} \frac{\dot{m}_{i,x}(t) \cdot m_i(t) - m_{i,x}(t) \cdot \dot{m}_i(t)}{(m_i(t))^2} = 0.
\]
Likewise, one gets $\lim_{t\to\infty}\dot{c}_{\Omega_i}^y(t)=0$, which implies $\lim_{t\to\infty}\dot{c}_{\Omega_i}(t)=\mathbf{0},  \forall~i\in \mathbb{I}_N$. The proof is completed.
\end{proof}

\begin{corollary}
Multi-agent dynamics \eqref{eq:agent} with control input \eqref{control} ensures $\lim_{t \to \infty} \mathbf{p}_i(t) = c_{\Omega_i}^*$, $\forall~i\in \mathbb{I}_N$. 
\end{corollary}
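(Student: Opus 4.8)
The plan is to specialize Theorem~\ref{1} and Corollary~\ref{lemma4} to the quadratic service cost $f(\mathbf{p}_i,\mathbf{q})=\|\mathbf{p}_i-\mathbf{q}\|^2$ and then upgrade the convergence $\mathbf{p}_i(t)-\mathbf{p}_i^*(t)\to\mathbf{0}$ to convergence toward a fixed limit point. First I would identify $\mathbf{p}_i^*$ explicitly: from $J_{\Omega_i}=\int_{\Omega_i}\|\mathbf{p}_i-\mathbf{q}\|^2\rho(\mathbf{q})\,d\mathbf{q}$ one obtains $\nabla_{\mathbf{p}_i}J_{\Omega_i}=2\big(m_i\mathbf{p}_i-\int_{\Omega_i}\mathbf{q}\,\rho(\mathbf{q})\,d\mathbf{q}\big)$, so the unique stationary point is the density-weighted centroid $\mathbf{p}_i^*=c_{\Omega_i}=\tfrac{1}{m_i}\int_{\Omega_i}\mathbf{q}\,\rho(\mathbf{q})\,d\mathbf{q}$ and the Hessian is $\mathbf{H}_{J,\mathbf{p}_i}=2m_i\mathbf{I}_2$. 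Since $\rho\ge\underline{\rho}>0$ and each subregion has positive area, $m_i>0$, hence $\mathrm{rank}[\mathbf{H}_{J,\mathbf{p}_i}(\mathbf{p}_i^*)]=2$, so Lemma~\ref{lem:opt}, Theorem~\ref{1} and Corollary~\ref{lemma4} all apply with $\mathbf{p}_i^*(t)=c_{\Omega_i}(t)$.

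Theorem~\ref{1} then gives $\lim_{t\to\infty}\|\mathbf{p}_i(t)-c_{\Omega_i}(t)\|=0$, so by the triangle inequality it remains only to show that $c_{\Omega_i}(t)$ converges, and its limit will be $c_{\Omega_i}^*$. Corollary~\ref{lemma4} already supplies $\dot c_{\Omega_i}(t)\to\mathbf{0}$, but a vanishing velocity does not by itself force convergence, so I would exploit the variational structure of the partition dynamics. The key observation is that \eqref{eq:fer}--\eqref{eq:point} is precisely the negative gradient flow of the Lyapunov function $V$ of Lemma~\ref{lem:con} with respect to $z=(\boldsymbol{\varphi},\mathbf{r})$: since $m_i$ depends only on $\varphi_i$, $\varphi_{i+1}$ and $\mathbf{r}_i$, differentiating $V=\tfrac12\sum_{i=1}^N(m_i-m_{i+1})^2+\tfrac12\sum_{i=1}^N\|\mathbf{r}_i-\mathbf{r}_{i+1}\|^2$ reproduces exactly the brackets in \eqref{eq:fer} and \eqref{eq:point}, i.e. $\dot z=-D\,\nabla V(z)$ with the constant positive-definite diagonal matrix $D=\mathrm{diag}(\kappa_\varphi\mathbf{I},\kappa_r\mathbf{I})$. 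The trajectory is bounded by Lemma~\ref{lem:con}; if $\rho$ and $\partial\Omega$ are real-analytic, $V$ is real-analytic and the \L{}ojasiewicz gradient inequality guarantees that $z(t)$ converges to a single equilibrium $z^*$, which by Lemma~\ref{lem:seteq} corresponds to a point of $\mathcal{M}(\mathbf{m},\mathbf{r})$. Since $c_{\Omega_i}$ is a continuous function of $z$, $c_{\Omega_i}(t)$ converges to the value $c_{\Omega_i}^*$ determined by $z^*$, and combining this with the Theorem~\ref{1} estimate yields $\lim_{t\to\infty}\mathbf{p}_i(t)=c_{\Omega_i}^*$ for every $i\in\mathbb{I}_N$.

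I expect this last step --- upgrading set-convergence to point-convergence --- to be the main obstacle. LaSalle's invariance principle (Lemma~\ref{lem:con}) only places the trajectory in $\mathcal{M}(\mathbf{m},\mathbf{r})$, which is a positive-dimensional manifold of equilibria (the common reference point and the residual rotational freedom of the phase angles are not pinned down), so convergence to one specific configuration is strictly stronger than what LaSalle alone gives and must rely on the gradient structure. If one is unwilling to assume analyticity of $\rho$, this is exactly the gap that would require a separate argument, for instance a uniform lower bound on $\|\nabla V(z)\|$ away from $\mathcal{M}(\mathbf{m},\mathbf{r})$, a normal-hyperbolicity/center-manifold reduction of the flow along $\mathcal{M}(\mathbf{m},\mathbf{r})$, or restricting attention to densities for which $V$ is real-analytic.
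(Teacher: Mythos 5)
Your core computation coincides with the paper's proof: both identify $\nabla_{\mathbf{p}_i}J_{\Omega_i}=2m_i(\mathbf{p}_i-c_{\Omega_i})$ and $\mathbf{H}_{J,\mathbf{p}_i}=2m_i\mathbf{I}_2$, conclude that the density-weighted centroid is the unique minimizer so that $\mathbf{p}_i^*=c_{\Omega_i}$, and then invoke Theorem~\ref{1} to get $\|\mathbf{p}_i(t)-c_{\Omega_i}(t)\|\to 0$. Where you diverge is that the paper stops there: it passes directly from the tracking estimate to ``$\lim_{t\to\infty}\mathbf{p}_i(t)=c_{\Omega_i}$'' without ever arguing that the time-varying centroid $c_{\Omega_i}(t)$ has a limit, even though the corollary statement names a fixed point $c_{\Omega_i}^*$. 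You correctly flag that $\dot c_{\Omega_i}\to\mathbf{0}$ (Corollary~\ref{lemma4}) and LaSalle convergence to the equilibrium set $\mathcal{M}$ do not by themselves pin down a single limit configuration, and your observation that \eqref{eq:fer}--\eqref{eq:point} is exactly the negative gradient flow of the Lyapunov function $V$ (one can verify the bracket in \eqref{eq:fer} equals $\partial V/\partial\varphi_i$ and likewise for $\mathbf{r}_i$) makes the \L{}ojasiewicz route a legitimate way to close that gap, at the price of an analyticity assumption on $\rho$ and $\partial\Omega$ that the paper never states. So your proposal is a strict refinement of the paper's argument: same main mechanism, plus an explicit treatment of the set-convergence versus point-convergence issue that the published proof silently elides. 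The only caveat on your side is the unexamined assumption that every $m_i(t)$ stays bounded away from zero along the trajectory (needed for $c_{\Omega_i}$ and $\mathbf{H}_{J,\mathbf{p}_i}^{-1}$ to be well defined), but the paper makes the identical tacit assumption.
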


\begin{proof}
Since $\rho(\mathbf{q})>0$, $\forall~\mathbf{q}\in\Omega_i$, 
$m_i=\int_{\Omega_i}\rho(\mathbf{q})\,d\mathbf{q}$
is strictly positive, and the centroid of $\Omega_i$ is given by
\[
c_{\Omega_i}=\frac{\int_{\Omega_i} \mathbf{q}\,\rho(\mathbf{q})\,d\mathbf{q}}{\int_{\Omega_i}\rho(\mathbf{q})\,d\mathbf{q}}.
\]
In addition, the gradient of $J_{\Omega_i}$ with respect to $\mathbf{p}_i$ is presented as
\[
\nabla_{\mathbf{p}_i} J_{\Omega_i}
= 2\int_{\Omega_i} \rho(\mathbf{q})\,(\mathbf{p}_i-\mathbf{q})^{T} d\mathbf{q}
= 2m_i\bigl(\mathbf{p}_i - c_{\Omega_i}\bigr).
\]
Due to $m_i>0$, the condition $\nabla_{\mathbf{p}_i} J_{\Omega_i}=0$ is equivalent to
$\mathbf{p}_i=c_{\Omega_i}$. Moreover, the Hessian of $J_{\Omega_i}$ with respect to $\mathbf{p}_i$ is $\mathbf{H}_{J,\mathbf{p}_i}(\mathbf{p}_i)=2m_i\mathbf{I}_2$,
which is positive definite. Hence $\mathbf{p}_i=c_{\Omega_i}$ is the unique minimizer of $J_{\Omega_i}$, and one has
$\mathbf{p}_i^* = c_{\Omega_i}$.
According to Theorem~\ref{1}, the control law~\eqref{control} guarantees
$\lim_{t\to\infty} \|\mathbf{p}_i(t) - \mathbf{p}_i^*(t)\| = 0$, which leads to
$\lim_{t\to\infty} \mathbf{p}_i(t) = c_{\Omega_i}$, $\forall i\in\mathbb{I}_N$.
This completes the proof.
\end{proof}


\section{Numerical Simulations}\label{sec:num}
This section presents a numerical example to validate the theoretical results. Consider the the function $f(\mathbf{p}_i, \mathbf{q}) = \|\mathbf{p}_i - \mathbf{q}\|^2$, and
the coverage environment is described as 
a closed elliptical region with boundary
${x^2}/{25} +{y^2}/{9} = 1$. The spatial density is specified by
$\rho(x, y) = 10^{-4}( \exp\!\big(\sin^2(\arctan(y/x))+\cos(\arctan(y/x))\big) +\sqrt{x^2+y^2})$.
which is strictly positive in the ellipse.
Six agents are deployed in this environment to execute the coverage task using the proposed partition dynamics and coverage control method. The control gains are set to
\(\kappa_p = 0.04 \), \(\kappa_\varphi = 0.045 \), \( \kappa_r = 0.05 \).
Each agent moves towards its own centroid while executing the partition strategy of rotary pointer to balance the workload. 
The centroid evolution is influenced by the positions of reference points and phase angles of the rotary pointers. 
As shown in Fig.~\ref{fig:main}, red points denote the mobile agents, and
black stars represent the centroids of the subregions. Blue asterisks denote the reference points, and
the curves with different colors correspond to the trajectories of the respective mobile agents.
\begin{figure}[t!]
\centering 
\subfigure{\includegraphics[width=0.23\textwidth,trim=0 0 0 30,clip,valign=t]{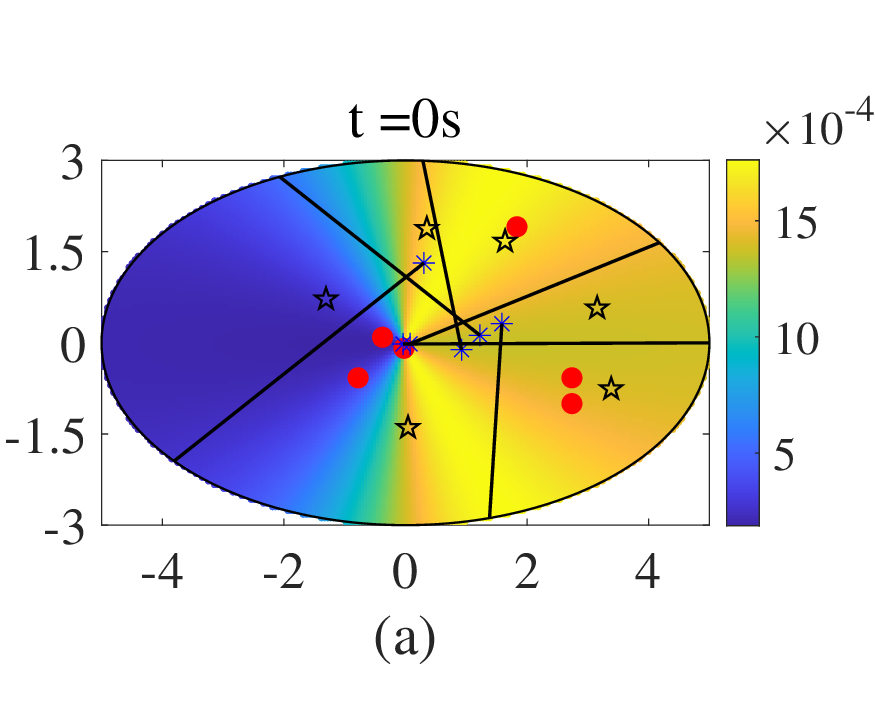}\label{fig:sub1}}%
    \hfill
\subfigure{\includegraphics[width=0.23\textwidth,trim=0 0 0 30,clip,valign=t]{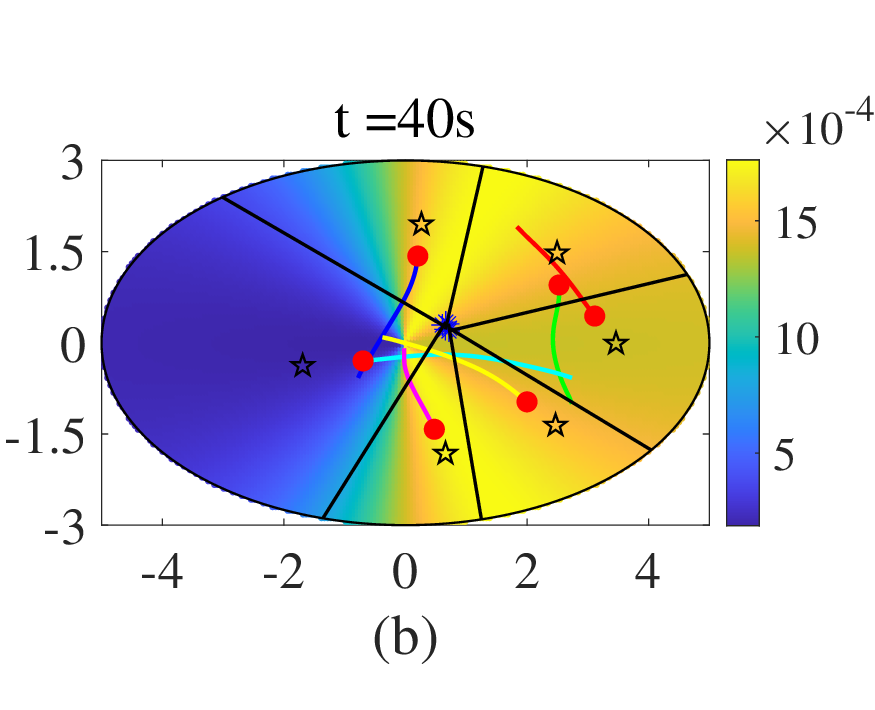}\label{fig:sub2}}%
    \vspace{-4mm}  
\subfigure{\includegraphics[width=0.23\textwidth,trim=0 20 0 30,clip,valign=t]{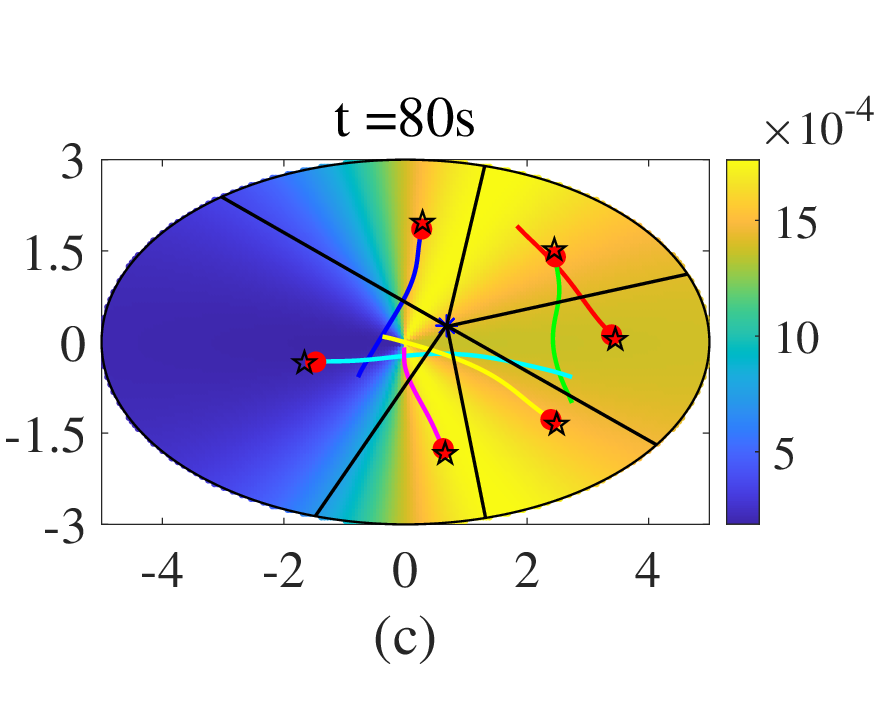}\label{fig:sub3}}%
    \hfill
\subfigure{\includegraphics[width=0.23\textwidth,trim=0 20 0 30,clip,valign=t]{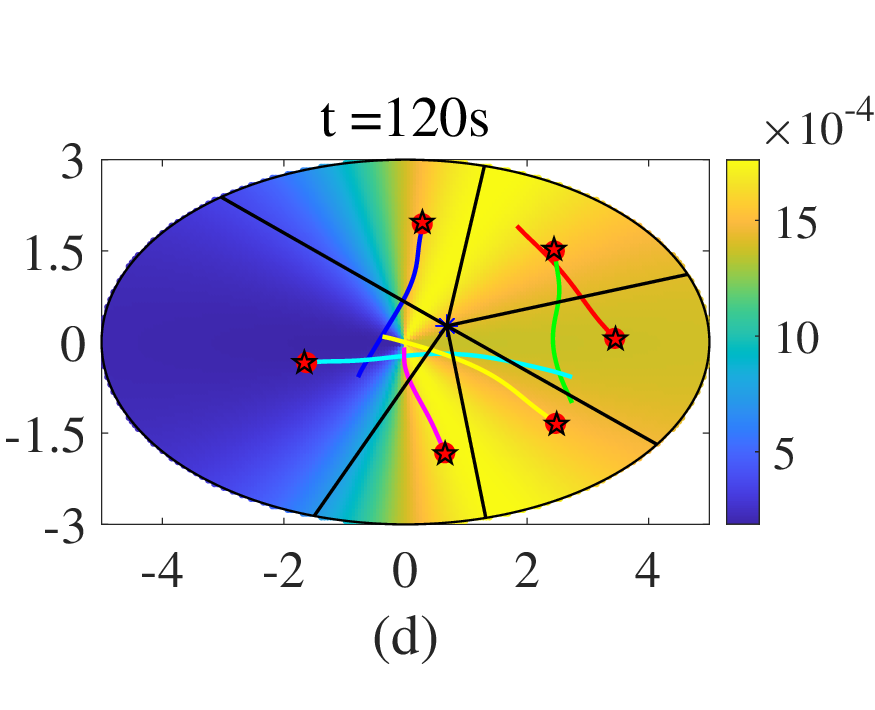}\label{fig:sub4}}%
\caption{Snapshots of simulation results, showing the region partitions, subregion centroids, and agent positions. The mobile agents are marked by red points, the centroids of the subregions are represented by black stars, and the reference points are indicated by blue asterisks.}\label{fig:main}
\end{figure}
In this study, the initial positions of all agents, reference points, and phase angles of the rotary pointers are randomly initialized. 
At \( t=40\,\text{s} \), although the agents have not yet reached the centroids of their respective subregions, their reference points have nearly realized the consensus, thereby completing the region partition. By \( t=80\,\text{s} \), the reference points have almost converged to a common point, and the workload has been nearly equalized. The positions of agents are very close to the centroids of their corresponding subregions. As shown in Fig.~\ref{fig:main}, each agent eventually moves to the centroid of its respective subregion, and the area of each subregion is determined by its associated density value.
To provide a clearer illustration of the algorithm performance, the function $\gamma_i =\|\mathbf{r}_i-\mathbf{r}_{i+1}\|^2$, $\forall~i\in\mathbb{I}_N$ is introduced to quantify the distance between adjacent reference points. Figure~\ref{fig_3} shows the evolution of $\gamma_i$ and the workload on each subregion. It can be observed that both the reference points and the workload of all subregions achieve the consensus.
This demonstrates that the efficiency of proposed coverage control algorithm.

\section{Conclusion}\label{sec:con}
This paper addressed multi-agent coverage control problem in uncertain environments and proposed a novel coverage formulation that enhances coverage performance through rotary pointer partition.
Based on this formulation, a distributed control algorithm was developed to ensure the convergence of multi-agent system toward the optimal centroid configuration of subregions. Simulation results demonstrated that the proposed method effectively balanced the workload among agents, improved the overall coverage efficiency with strong adaptability to uncertain environments.
\begin{figure}[t!]
\centering
\includegraphics[width=3.5in]{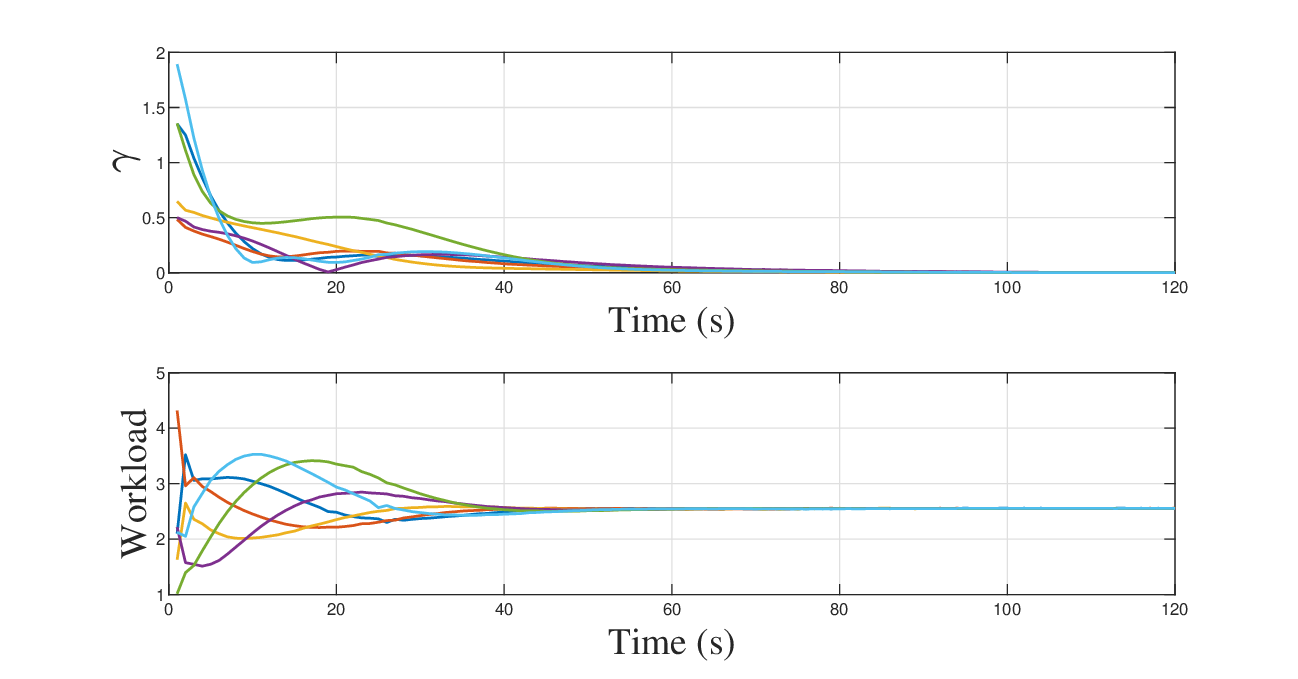}
\caption{Evolution of workload partition and the  $\gamma$ function.}
\label{fig_3}
\end{figure}


\bibliographystyle{IEEEtran}
\bibliography{IEEEexample.bib}

@article{b1,
  title     = {A multirobot path-planning strategy for autonomous wilderness search and rescue},
  author    = {Macwan, Ashish and Vilela, Julio and Nejat, Goldie and Benhabib, Beno},
  journal   = {IEEE Transactions on Cybernetics},
  volume    = {45},
  number    = {9},
  pages     = {1784--1797},
  year      = {2014},
  publisher = {IEEE}
}

@article{b2,
  title     = {Distributed on-line dynamic task assignment for multi-robot patrolling},
  author    = {Farinelli, Alessandro and Iocchi, Luca and Nardi, Daniele},
  journal   = {Autonomous Robots},
  volume    = {41},
  pages     = {1321--1345},
  year      = {2017},
  publisher = {Springer}
}

@article{b3,
  title     = {Decentralized sweep coverage algorithm for multi-agent systems with workload uncertainties},
  author    = {Zhai, Chao and Hong, Yiguang},
  journal   = {Automatica},
  volume    = {49},
  number    = {7},
  pages     = {2154--2159},
  year      = {2013},
  publisher = {Elsevier}
}

@article{b4,
  title     = {Coverage control for mobile sensing networks},
  author    = {Cortes, Jorge and Martinez, Sonia and Karatas, Timur and Bullo, Francesco},
  journal   = {IEEE Transactions on Robotics and Automation},
  volume    = {20},
  number    = {2},
  pages     = {243--255},
  year      = {2004},
  publisher = {IEEE}
}

@article{b5,
  title     = {Motion control for autonomous heterogeneous multiagent area search in uncertain conditions},
  author    = {Ivi{\'c}, Stefan},
  journal   = {IEEE Transactions on Cybernetics},
  volume    = {52},
  number    = {5},
  pages     = {3123--3135},
  year      = {2020},
  publisher = {IEEE}
}

@book{b6,
  title     = {Cooperative coverage control of multi-agent systems and its applications},
  author    = {Zhai, Chao and Zhang, Hai-Tao and Xiao, Gaoxi},
  volume    = {408},
  year      = {2021},
  publisher = {Springer}
}

@inproceedings{b7,
  title        = {Anchor-Oriented Localized Voronoi Partitioning for GPS-denied Multi-Robot Coverage},
  author       = {Munir, Aiman and Latif, Ehsan and Parasuraman, Ramviyas},
  booktitle    = {2024 IEEE/RSJ International Conference on Intelligent Robots and Systems (IROS)},
  pages        = {3395--3402},
  year         = {2024},
  organization = {IEEE}
}

@article{b8,
  title     = {Multirobot unknown environment exploration and obstacle avoidance based on a Voronoi diagram and reinforcement learning},
  author    = {Zhao, Hongyang and Guo, Yanan and Liu, Yi and Jin, Jing},
  journal   = {Expert Systems with Applications},
  volume    = {264},
  pages     = {125900},
  year      = {2025},
  publisher = {Elsevier}
}

@article{b9,
  title     = {Workload partitioning for accelerating applications on heterogeneous platforms},
  author    = {Shen, Jie and Varbanescu, Ana Lucia and Lu, Yutong and Zou, Peng and Sips, Henk},
  journal   = {IEEE Transactions on Parallel and Distributed Systems},
  volume    = {27},
  number    = {9},
  pages     = {2766--2780},
  year      = {2015},
  publisher = {IEEE}
}

@article{b10,
  title     = {Multi-agent systems coverage control in mixed-dimensional and hybrid environments},
  author    = {Liu, Tairan and Velni, Javad Mohammadpour},
  journal   = {IFAC-PapersOnLine},
  volume    = {54},
  number    = {20},
  pages     = {765--770},
  year      = {2021},
  publisher = {Elsevier}
}

@article{b11,
  title     = {Statistical coverage control of mobile sensor networks},
  author    = {Arslan, {\"O}m{\"u}r},
  journal   = {IEEE Transactions on Robotics},
  volume    = {35},
  number    = {4},
  pages     = {889--908},
  year      = {2019},
  publisher = {IEEE}
}

@article{b12,
  title     = {Optimal transport-based coverage control for swarm robot systems: Generalization of the voronoi tessellation-based method},
  author    = {Inoue, Daisuke and Ito, Yuji and Yoshida, Hiroaki},
  journal   = {IEEE Control Systems Letters},
  volume    = {5},
  number    = {4},
  pages     = {1483--1488},
  year      = {2020},
  publisher = {IEEE}
}

@article{b13,
  title     = {Equitable persistent coverage of non-convex environments with graph-based planning},
  author    = {Palacios-Gas{\'o}s, Jos{\'e} Manuel and Tardioli, Danilo and Montijano, Eduardo and Sag{\"u}{\'e}s, Carlos},
  journal   = {The International Journal of Robotics Research},
  volume    = {38},
  number    = {14},
  pages     = {1674--1694},
  year      = {2019},
  publisher = {SAGE Publications Sage UK: London, England}
}

@article{b14,
  title     = {Sectorial coverage control with load balancing in non-convex hollow environments},
  author    = {Zhai, Chao and Fan, Pengyang and Zhang, Hai-Tao},
  journal   = {Automatica},
  volume    = {157},
  pages     = {111246},
  year      = {2023},
  publisher = {Elsevier}
}

@article{b16,
  title     = {Centroidal Voronoi tessellations: Applications and algorithms},
  author    = {Du, Qiang and Faber, Vance and Gunzburger, Max},
  journal   = {SIAM Review},
  volume    = {41},
  number    = {4},
  pages     = {637--676},
  year      = {1999},
  publisher = {SIAM}
}

\end{document}